\documentclass[graybox]{SNmult}

\usepackage{type1cm}
\usepackage{makeidx}
\usepackage{graphicx}
\usepackage{multicol}
\usepackage[bottom]{footmisc}
\usepackage{newtxtext}        
\usepackage[varvw]{newtxmath}

\makeindex

\begin{document}

\title*{Efficient computation of eddy-currents for nonlinear magnetic field problems}

\author{Herbert Egger, 
Nepomuk Krenn, and
Andreas Schafelner}

\institute{Herbert Egger \at Institute for Numerical Mathematics, Johannes Kepler University, Altenbergerstr.~69, 4040 Linz, Austria \email{herbert.egger@jku.at}
\and 
Nepomuk Krenn \at Johann Radon Institute for Computational and Applied Mathematics, Austrian Academy of Sciences, Altenbergerstr.~69, 4040~Linz, Austria, \email{nepomuk.krenn@ricam.oeaw.ac.at} 
\and 
Andreas Schafelner \at Institute for Numerical Mathematics, Johannes Kepler University, Altenbergerstr.~69, 4040 Linz, Austria \email{andreas.schafelner@jku.at}}
%


\maketitle


\abstract{Estimation of eddy-current losses in conducting non-laminated components of electrical devices requires expensive three-dimensional simulations. Various approximations are therefore used in practice to reduce the computational cost in the early design phase. We review some approaches and discuss their modelling assumptions and resulting approximations. In particular, we identify eddy-current reaction fields as a significant contribution that should be accounted for globally. These reaction fields can be approximately reconstructed from two-dimensional magnetostatic simulations by solving a single linearized time-periodic problem. We further discuss different strategies for solving this post-processing problem. Numerical results demonstrate improved loss prediction compared to standard post-processing at moderate additional cost. 
\keywords{eddy-currents $\cdot$ 
magneto-quasi-static modelling $\cdot$
nonlinear magnetostatics $\cdot$ 
finite element method $\cdot$ electrical machine losses $\cdot$ post-processing}}

\section{Introduction}
\label{egger:sec:1}

The analysis of eddy-currents by finite element methods is well established~\cite{egger:Kameari1990}. Related losses in windings and other electrical conductors are frequently represented by equivalent circuit models, which can be coupled efficiently to field formulations~\cite{egger:Tsukerman1992,egger:Demenko2010}. 
Losses in laminated iron cores can be treated by appropriate lamination models~\cite{egger:Gyselinck2006}.

A particular challenge arises in the prediction of eddy-current losses in conducting non-laminated components, such as machine housings, shielding elements, or permanent magnets in electrical motors. For these parts, the current paths are inherently three-dimensional and accurate loss prediction formally requires the solution of three-dimensional nonlinear time-dependent eddy-current problems~\cite{egger:Biro2006,egger:Takahashi2012}. To avoid the cost of such computations, 
various methods have been proposed to approximate these effects while enabling efficient simulation.
A common strategy is to compute a sequence of two-dimensional nonlinear magnetostatic or quasi-static field solutions and to estimate the associated eddy-current losses by suitable post-processing~\cite{egger:Deak2008,egger:Belahcen2010}. More sophisticated approaches combine two-dimensional field simulations with auxiliary three-dimensional eddy-current computations~\cite{egger:Yamazaki2009,egger:Okitsu2012}. 
A related approach that avoids three-dimensional computations was presented in~\cite{egger:Steentjes2015}. 
While computationally efficient, the validity and simplifying assumptions of these models are not completely clear. 
We therefore focus on eddy-current computations in two-dimensional models in this paper.

\medskip 
\textbf{Outline and main results.}
We start by revisiting some commonly used approaches for the computation of eddy-current losses in conducting non-laminated components. We recall the two-dimensional time-dependent formulation of \cite{egger:Tsukerman1992,egger:Belahcen2010} and interpret it as a consistent reduction of a class of three-dimensional eddy-current problems under specific symmetry assumptions on the problem data. This viewpoint highlights the role of the global reaction field, which is not fully captured in standard magnetostatic approximations. Based on this interpretation, we derive a linearized eddy-current problem that estimates the missing reaction field from a sequence of nonlinear magnetostatic solutions. 
This correction may be interpreted as one Newton update step for the underlying system.
We thus replace the solution of the original nonlinear time-periodic eddy-current problem in \cite{egger:Tsukerman1992,egger:Belahcen2010} by the solution of 
\begin{itemize} \itemindent1em
\item[(i)] \; a sequence of nonlinear magnetostatic problems;
\item[(ii)] \; one linearized time-periodic eddy-current problem.
\end{itemize}
The system arising in step (ii) can be solved efficiently using parallel-in-time techniques and provides a consistent way to incorporate eddy-current feedback into standard post-processing workflows.
The general approach (i)--(ii) is applicable in two and three space dimensions, but we here focus on the two-dimensional setting usually employed in electric machine design.
In numerical experiments for a permanent-magnet synchronous machine, we illustrate that direct estimation of eddy-current losses from the magnetostatic field solution in step (i) may lead to a significant overestimation of total losses (more than 100\%). 
Correcting for the reaction fields via step (ii) leads to loss predictions with excellent agreement (errors below 1\%) with the nonlinear two-dimensional time-dependent reference model \cite{egger:Tsukerman1992,egger:Belahcen2010}. While providing substantial improvement in accuracy, the additional computational cost of step (ii) is very moderate (about 30-60\% compared to the magnetostatic solutions in step (i)). 
The proposed strategy thus yields similar accuracy to the nonlinear time-periodic reference model, but at reduced computational cost (about 30-50\% for our test case). 
Before concluding, we also briefly comment on three-dimensional post-processing approaches and highlight some of their current limitations which provide research topics for future work. 
%

\section{Eddy-current models}
\label{egger:sec:2}

We start with recalling the eddy-current problems in two and three space dimensions, discuss their relations, and present some approximations considered in literature. This section introduces the general problem setting which is used in the rest of the paper.

\subsection{Three-dimensional eddy-current problem}
Let $\Omega^{3D} \subset \mathbb{R}^3$ be a bounded domain and $\Sigma^{3D} \subset \Omega^{3D}$ describe the conducting region, which we assume to be connected fr ease of notation.
We consider the $A$-$\phi$ formulation, which is widely used in practice~\cite{egger:Kameari1990}. The governing equations 
are given by
\begin{alignat}{2} 
\sigma (\partial_t \mathbf{A} + \nabla \phi) + \operatorname{curl} ( \partial_B w(\operatorname{curl} \mathbf{A}) )
&= \mathbf{J}_s \qquad && \text{in } \Omega^{3D}\label{egger:eq:3D_1}\\
\operatorname{div} \mathbf{A} &= 0  \qquad && \text{in } \Omega^{3D}.\label{egger:eq:3D_2}
\end{alignat}
Here $\mathbf{A}$ is the magnetic vector potential and $\phi$ the electric scalar potential, 
whose support is restricted to $\Sigma^{3D}$. 
Further $\mathbf{B}=\operatorname{curl}\mathbf{A}$ is the magnetic flux density and $\mathbf{H}=\partial_B w(\mathbf{B})$ the magnetic field intensity. The function $w(\mathbf{B})$ is the magnetic energy density. In isotropic materials, we have $\partial_B w(\mathbf{B}) = \nu(|\mathbf{B}|) \, \mathbf{B}$, with $\nu$ denoting the field-dependent reluctivity. 
The electric conductivity $\sigma$ is non-zero only on $\Sigma^{3D}$, and the source current density $\mathbf{J}_s$ is assumed solenoidal and to vanish in a neighborhood of the conductor $\Sigma^{3D}$. 
The eddy-current density in this model is $\mathbf{J}_{ec}=-\sigma (\partial_t \mathbf{A} + \nabla \phi)$, and the loss density reads
\begin{align}
p^{3D}_{ec} = \tfrac{1}{\sigma} |\mathbf{J}_{ec}|^2 = \sigma |\partial_t \mathbf{A} + \nabla \phi|^2.
\end{align}
Total losses are computed by integration over the conductor $\Sigma^{3D}$. 
From \eqref{egger:eq:3D_1} and appropriate boundary conditions for $\phi$, we further conclude that $\operatorname{div} \mathbf{J}_{ec}=0$ on $\Sigma^{3D}$ and $\mathbf{n} \cdot \mathbf{J}_{ec}=0$ on $\partial\Sigma^{3D}$. These physical conditions are used in the following discussion.
%

\subsection{Two-dimensional eddy-current problem}
If the domain and problem data are independent of the $z$-coordinate, following two-dimensional model is appropriate~\cite{egger:Salon}. We denote by $\Omega \subset \mathbb{R}^2$ the cross-section of the three-dimensional geometry and by $\Sigma \subset \Omega$ the corresponding cross-section of the conductor.
Following \cite{egger:Tsukerman1992,egger:Belahcen2010}, the magnetic fields are described by 
\begin{align}
\sigma \big(\partial_t \mathbf{a} + \overline{ \mathbf{g}}\big) + \mathrm{curl}\big(\partial_{B} w(\mathrm{Curl} \mathbf{a})\big) &= \mathbf{j}_s \quad \text{in } \Omega \label{egger:eq:1}\\
\int_{\Sigma} \sigma \big(\partial_t \mathbf{a} + \overline{\mathbf{g}}\big)\,\mathrm{d}x &= 0. \label{egger:eq:2}
\end{align}
Here $\mathbf{a}$ and $\mathbf{j}_s$ are the axial 
components of the magnetic vector potential and the source current density, and $\overline{\mathbf{g}}$ represents the gradient of the electric scalar potential which is assumed constant on $\Sigma$. 
Further, $\mathbf{b}=\mathrm{Curl}(\mathbf{a})=(\partial_y \mathbf{a}, -\partial_x \mathbf{a})$ represents the in-plane components of the magnetic induction, $\mathbf{h} = \partial_B w(\mathbf{b})$ the corresponding magnetic field components, and $\operatorname{curl} \mathbf{h} = \partial_x \mathbf{h}_y - \partial_y \mathbf{h}_x$ the vector-to-scalar curl in two dimensions. 
The eddy-currents here are $\mathbf{j}_{ec} = -\sigma (\partial_t \mathbf{a} + \overline{\mathbf{g}})$ and the corresponding loss density reads
\begin{align}\label{egger:eq:3}
    p_{ec} = \tfrac{1}{\sigma} |\mathbf{j}_{ec}|^2 = \sigma |\partial_t \mathbf{a} + \overline{\mathbf{g}}|^2.
\end{align}
Together with appropriate initial and boundary conditions, existence of a unique solution to \eqref{egger:eq:1}--\eqref{egger:eq:2} can be established for the transient and time-periodic case; see~\cite{egger:Egger2025}.
For later reference, let us clarify the relations between the two- and the three-dimensional eddy-current problems. 
\begin{theorem}
Let the functions $(\mathbf{a},\overline{\mathbf{g}})$ denote a solution of the two-dimensional eddy-current problem \eqref{egger:eq:1}--\eqref{egger:eq:2}. 
Then $\mathbf{A}=(0,0,\mathbf{a})$ and $\phi = \overline{\mathbf{g}} \, z$ defines a solution of the three-dimensional eddy-current problem \eqref{egger:eq:3D_1}--\eqref{egger:eq:3D_2} with current density $\mathbf{J}_s = (0,0,\mathbf{j}_s)$ 
on the cylindrical domain $\Omega^{3D} = \Omega \times (0,L)$ and $\Sigma^{3D} = \Sigma \times (0,L)$.
\end{theorem}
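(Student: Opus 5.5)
The plan is to verify the claim by direct substitution of the ansatz $\mathbf{A}=(0,0,\mathbf{a}(x,y,t))$, $\phi=\overline{\mathbf{g}}(t)\,z$ into \eqref{egger:eq:3D_1}--\eqref{egger:eq:3D_2}, component by component. Throughout, $\sigma$, $w$ and $\mathbf{j}_s$ are independent of $z$, and $\phi$ is supported on $\Sigma^{3D}$; since $\sigma$ vanishes outside $\Sigma^{3D}$, this restriction does not affect the equation.

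\textbf{Gauge condition and induction.} First, $\operatorname{div}\mathbf{A}=\partial_z\mathbf{a}=0$, because $\mathbf{a}$ does not depend on $z$, so \eqref{egger:eq:3D_2} holds. Next we compute
\begin{equation*}
\operatorname{curl}\mathbf{A}=(\partial_y\mathbf{a},-\partial_x\mathbf{a},0)=(\mathbf{b},0), \qquad \mathbf{b}=\mathrm{Curl}\,\mathbf{a}.
\end{equation*}
For the isotropic law we get $\partial_B w(\operatorname{curl}\mathbf{A})=\nu(|\mathbf{b}|)(\mathbf{b},0)=(\mathbf{h},0)$. For a general energy density we must assume compatibility, namely that $w$ restricted to in-plane fields gives the 2D energy and that $\partial_B w$ maps in-plane fields to in-plane fields. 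I would state this explicitly as part of the symmetry assumption; this is the only subtle point. The vector $(\mathbf{h},0)$ is independent of $z$, so
\begin{equation*}
\operatorname{curl}(\mathbf{h},0)=(\partial_y 0-\partial_z\mathbf{h}_y,\ \partial_z\mathbf{h}_x-\partial_x 0,\ \partial_x\mathbf{h}_y-\partial_y\mathbf{h}_x)=(0,0,\operatorname{curl}\mathbf{h}).
\end{equation*}

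\textbf{Electric part and comparison.} We have $\nabla\phi=(0,0,\overline{\mathbf{g}})$ on $\Sigma^{3D}$, hence $\sigma(\partial_t\mathbf{A}+\nabla\phi)=(0,0,\sigma(\partial_t\mathbf{a}+\overline{\mathbf{g}}))$. The $x$- and $y$-components of \eqref{egger:eq:3D_1} therefore read $0=0$. The $z$-component is exactly \eqref{egger:eq:1} with $\mathbf{J}_s=(0,0,\mathbf{j}_s)$.

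\textbf{Remaining conditions.} It remains to check the admissibility conditions stated in the 3D setting. The source $\mathbf{J}_s$ is solenoidal, since $\partial_z\mathbf{j}_s=0$. For the eddy current $\mathbf{J}_{ec}=(0,0,\mathbf{j}_{ec})$ we have $\operatorname{div}\mathbf{J}_{ec}=0$, and $\mathbf{n}\cdot\mathbf{J}_{ec}=0$ on the lateral boundary $\partial\Sigma\times(0,L)$, where $\mathbf{n}$ has zero $z$-component.

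The main obstacle is the end caps $z=0,L$. There $\mathbf{n}\cdot\mathbf{J}_{ec}=\pm\mathbf{j}_{ec}$ does not vanish pointwise. The 2D constraint \eqref{egger:eq:2} guarantees only zero net current through each cross-section. I would handle this by interpreting the cylinder as infinitely long or periodic in $z$, so that there is no end-cap boundary condition, or equivalently by noting that \eqref{egger:eq:2} is the natural integral (zero net current) condition replacing it. The potential $\phi$ is then a $z$-linear potential with constant jump $\overline{\mathbf{g}}L$ across the period, which is consistent with appropriate boundary conditions for $\phi$. Under this reading, the pair $(\mathbf{A},\phi)$ satisfies \eqref{egger:eq:3D_1}--\eqref{egger:eq:3D_2}, which completes the argument.
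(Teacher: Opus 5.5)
Your proof is correct and follows the paper's route: direct substitution of the ansatz, which the paper calls immediate. Your explicit assumption that $\partial_B w$ maps in-plane fields to in-plane fields is a useful precision that the paper leaves implicit for non-isotropic energy densities.

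Your final paragraph is not needed, and its claim of consistency should be dropped. The theorem only asserts \eqref{egger:eq:3D_1}--\eqref{egger:eq:3D_2} on the finite cylinder, and your substitution already proves that. The paper does not reinterpret the domain as infinite or periodic; it states in a remark that $\mathbf{n}\cdot\mathbf{J}_{ec}\neq 0$ on the end caps is a genuine consistency error of the 2D model, corresponding to short-circuited ends.
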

The claim follows immediately from the construction of the fields $\mathbf{A}$ and $\nabla \phi$. 
In particular, the gauge condition $\operatorname{div} \mathbf{A}=0$ is satisfied automatically.    
\begin{remark}
The construction of the three-dimensional solution implies that the eddy-currents are related by $\mathbf{J}_{ec} = (0,0,\mathbf{j}_{ec})$ and charge conservation $\operatorname{div}(\mathbf{J}_{ec})=0$ holds true. However, the conductor $\Sigma^{3D}$ touches the boundary of $\Omega^{3D}$ in axial direction and therefore $\mathbf{n} \cdot \mathbf{J}_{ec} \ne 0$ on the axial boundaries of $\Sigma^{3D}$. From \eqref{egger:eq:2}, one can only conclude that the total current across these boundaries vanishes, which is the best approximation achievable within the two-dimensional model. In practice, this would mean that the axial ends of the conductor are short-circuited, which is not satisfied in relevant applications and hence generates a consistency error; see \cite{egger:Belahcen2010,egger:Steentjes2015} for further discussion.
\end{remark}

\subsection{Simplified eddy-current model}

While feasible in principle, the computation of eddy-current losses via \eqref{egger:eq:1}--\eqref{egger:eq:2} is still computationally expensive and usually avoided during design optimization. The standard computational practice is to consider instead a sequence of static field problems 
\begin{align}
\operatorname{curl} (\partial_B w(\operatorname{Curl} \hat{\mathbf{a}})) &= \mathbf{j}_s \qquad \text{in } \Omega\label{egger:eq:static}
\end{align}
and to estimate the losses by post-processing. 
%
A first guess would be to define eddy-currents by $\hat{\mathbf{j}}'_{ec}=\sigma \partial_t \hat{\mathbf{a}}$, which would however violate the two-dimensional charge conservation condition $\int_\Sigma \hat{\mathbf{j}}'_{ec} \mathrm{d}x=0$, see \eqref{egger:eq:2}. Following \cite{egger:Deak2008,egger:Belahcen2010}, a better choice is to define $\hat{\mathbf{j}}_{ec}=-\sigma (\partial_t \hat{\mathbf{a}} + \hat{\mathbf{g}})$ with $\hat{\mathbf{g}}$ chosen such that $\int_\Sigma \sigma (\partial_t \hat{\mathbf{a}} + \hat{\mathbf{g}}) \mathrm{d}x = 0$, and to set 
\begin{align} \label{egger:eq:static_ec}
\hat p_{ec} = \tfrac{1}{\sigma} |\hat{\mathbf{j}}_{ec}|^2 = \sigma |\partial_t \hat{\mathbf{a}} + \hat{\mathbf{g}}|^2.    
\end{align}
This approach can be integrated easily as a post-processing step in a standard computational workflow based on two-dimensional magnetostatic simulations.
%

\section{Eddy-current post-processing}

We now present our eddy-current post-processing scheme that takes into account global reaction fields and charge conservation, but avoids the solution of the full nonlinear time-dependent eddy-current problem~\eqref{egger:eq:1}--\eqref{egger:eq:2}.
We start from the magnetostatic field $\hat{\mathbf{a}}$ determined by \eqref{egger:eq:static} and make the ansatz 
\begin{align}  \label{egger:eq:post0}
\tilde{\mathbf{a}} = \hat{\mathbf{a}} + \tilde{\mathbf{z}}
\end{align}
to obtain an improved estimate $\tilde{\mathbf{a}}$ of the two-dimensional vector potential $\mathbf{a}$.
The correction field $\tilde{\mathbf{z}}$ is determined by the linearized eddy-current problem
\begin{align}
\sigma\big(\partial_t \tilde{\mathbf{z}} + \tilde{\mathbf{g}}\big) + \operatorname{curl}\big(
\tilde \nu \operatorname{Curl} \tilde{\mathbf{z}} \big) &= -\sigma \partial_t \hat{\mathbf{a}} \qquad \text{in } \Omega, \label{egger:eq:post1}\\
\int_{\Sigma} \sigma \big(\partial_t \tilde{\mathbf{z}} + \tilde{\mathbf{g}}\big)\,\mathrm{d}x &= - \int_{\Sigma} \sigma \partial_t \hat{\mathbf{a}}\,\mathrm{d}x. \label{egger:eq:post2}
\end{align}
Specific choices for the linearized reluctivity $\tilde{\nu}$ will be discussed below. 
The eddy-currents for this model are given by $\tilde{\mathbf{j}}_{ec} = -\sigma (\partial_t \tilde{\mathbf{a}} +\tilde{\mathbf{g}})$ 
and the resulting losses are
\begin{align} \label{egger:eq:post3}
\tilde{p}_{ec} = \tfrac{1}{\sigma} |\tilde{\mathbf{j}}_{ec}|^2 = \sigma |\partial_t \tilde{\mathbf{a}} + \tilde{\mathbf{g}}|^2.
\end{align}
The model \eqref{egger:eq:post0}--\eqref{egger:eq:post3} involves the solution of a sequence of nonlinear magnetostatic field problems \eqref{egger:eq:static} and the solution of one 
linearized time-periodic eddy-current correction problem \eqref{egger:eq:post1}--\eqref{egger:eq:post2}, which has a similar structure to the nonlinear eddy-current model \eqref{egger:eq:1}--\eqref{egger:eq:2}, but significantly cheaper to solve numerically. 
Together with appropriate boundary and initial conditions, the existence of a unique solution to the linearized problem again follows from the results in~\cite{egger:Egger2025}.
%

\subsection{Theoretical justification} 

We consider a time-periodic setting with homogeneous boundary conditions and assume that unique solvability of the problems \eqref{egger:eq:1}--\eqref{egger:eq:2}, \eqref{egger:eq:static}, and \eqref{egger:eq:post1}--\eqref{egger:eq:post2} is guaranteed.
The corresponding solutions are denoted by $(\mathbf{a},\overline{\mathbf{g}})$, $\hat{\mathbf{a}}$, and $(\tilde{\mathbf{z}},\tilde{\mathbf{g}})$, respectively.
By the superposition principle for linear problems, we immediately obtain the following result.
\begin{theorem} \label{thm:1}
Let $w(\mathbf{b})$ be a quadratic function. 
Then for the choice $\tilde{\nu} = \partial_{BB} w(\operatorname{Curl} \hat{\mathbf{a}})$ for the reluctivity, we have $\mathbf{a}=\tilde{\mathbf{a}}$ and $\bar{\mathbf{g}} = \tilde{\mathbf{g}}$ with $\tilde{\mathbf{a}} = \hat{\mathbf{a}} + \tilde{\mathbf{z}}$ according to \eqref{egger:eq:post0}.
\end{theorem}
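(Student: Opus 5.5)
The plan is to exploit linearity directly. If $w$ is quadratic, then $\partial_B w(\mathbf{b}) = \nu_0 \mathbf{b} + \mathbf{h}_0$ for a constant (possibly matrix-valued and space-dependent) reluctivity $\nu_0 = \partial_{BB} w$ and a field-independent term $\mathbf{h}_0$, for instance a remanence contribution of permanent magnets. In particular $\tilde\nu = \partial_{BB} w(\operatorname{Curl}\hat{\mathbf{a}}) = \nu_0$ does not depend on $\hat{\mathbf{a}}$, and for any potentials $\mathbf{u}, \mathbf{v}$ we have the splitting
\begin{align*}
\partial_B w(\operatorname{Curl}(\mathbf{u}+\mathbf{v})) = \partial_B w(\operatorname{Curl}\mathbf{u}) + \tilde\nu \operatorname{Curl}\mathbf{v}.
\end{align*}
This identity is the only place where the quadratic structure enters.

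Next I will verify that the pair $(\tilde{\mathbf{a}},\tilde{\mathbf{g}})$ with $\tilde{\mathbf{a}} = \hat{\mathbf{a}} + \tilde{\mathbf{z}}$ solves \eqref{egger:eq:1}--\eqref{egger:eq:2}.

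For \eqref{egger:eq:1}, insert $\tilde{\mathbf{a}}$ and apply the splitting with $\mathbf{u}=\hat{\mathbf{a}}$, $\mathbf{v}=\tilde{\mathbf{z}}$. The left-hand side becomes
\begin{align*}
\sigma(\partial_t \hat{\mathbf{a}} + \partial_t\tilde{\mathbf{z}} + \tilde{\mathbf{g}}) + \operatorname{curl}(\partial_B w(\operatorname{Curl}\hat{\mathbf{a}})) + \operatorname{curl}(\tilde\nu\operatorname{Curl}\tilde{\mathbf{z}}).
\end{align*}
By \eqref{egger:eq:static}, the second term equals $\mathbf{j}_s$. By \eqref{egger:eq:post1}, the remaining terms $\sigma(\partial_t\tilde{\mathbf{z}}+\tilde{\mathbf{g}}) + \operatorname{curl}(\tilde\nu\operatorname{Curl}\tilde{\mathbf{z}})$ equal $-\sigma\partial_t\hat{\mathbf{a}}$, which cancels the first contribution. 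Hence \eqref{egger:eq:1} holds.

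For \eqref{egger:eq:2}, adding $\int_\Sigma \sigma\partial_t\hat{\mathbf{a}}\,\mathrm{d}x$ to both sides of \eqref{egger:eq:post2} gives $\int_\Sigma\sigma(\partial_t\tilde{\mathbf{a}}+\tilde{\mathbf{g}})\,\mathrm{d}x = 0$.

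Boundary conditions and time-periodicity carry over, since $\hat{\mathbf{a}}$ and $\tilde{\mathbf{z}}$ both satisfy homogeneous boundary conditions and are time-periodic. Note that $\hat{\mathbf{a}}$ inherits periodicity from the periodic data $\mathbf{j}_s$ through unique solvability of \eqref{egger:eq:static}.

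Finally, the assumed unique solvability of \eqref{egger:eq:1}--\eqref{egger:eq:2} gives $\mathbf{a}=\tilde{\mathbf{a}}$ and $\overline{\mathbf{g}}=\tilde{\mathbf{g}}$.

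The only delicate point is the splitting identity. One must be careful that a quadratic $w$ may carry a linear term, such as magnet remanence, so $\partial_B w$ is affine rather than linear. The affine part is fully absorbed by the magnetostatic solution $\hat{\mathbf{a}}$, while only the linear part $\tilde\nu$ acts on the correction $\tilde{\mathbf{z}}$. If the time-periodic framework needs it, I would carry out the argument in the weak formulation, where the same cancellations hold term by term.
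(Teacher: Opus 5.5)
Your proof is correct and is the superposition argument the paper invokes in one line. For quadratic $w$, $\partial_B w$ is affine with constant $\tilde\nu=\partial_{BB}w$, so $(\hat{\mathbf{a}}+\tilde{\mathbf{z}},\tilde{\mathbf{g}})$ satisfies \eqref{egger:eq:1}--\eqref{egger:eq:2} and the assumed uniqueness gives the claim; you spell this out, and your explicit handling of the field-independent (remanence) term is a detail the paper leaves implicit.
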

The proposed post-processing strategy thus is exact for problems with linear materials. 
The second result clarifies the situation for the nonlinear magnetic regime. 
\begin{theorem} \label{thm:2}
Let $\tilde{\nu} = \partial_{BB} w(\operatorname{Curl} \hat{\mathbf{a}})$ and $\tilde{\mathbf{a}}= \hat{\mathbf{a}} + \tilde{\mathbf{z}}$ as before. Then $(\tilde{\mathbf{a}},\tilde{\mathbf{g}})$ amounts to the first iterate of Newton's method applied to \eqref{egger:eq:1}--\eqref{egger:eq:2} with initial iterate chosen as $(\hat{\mathbf{a}},\mathbf{0})$.
\end{theorem}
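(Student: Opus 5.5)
The plan is to write \eqref{egger:eq:1}--\eqref{egger:eq:2} as an abstract nonlinear equation $F(\mathbf{a},\overline{\mathbf{g}}) = 0$ in the time-periodic setting. Its two components are
\begin{align*}
F_1(\mathbf{a},\overline{\mathbf{g}}) &= \sigma\big(\partial_t \mathbf{a} + \overline{\mathbf{g}}\big) + \operatorname{curl}\big(\partial_B w(\operatorname{Curl}\mathbf{a})\big) - \mathbf{j}_s, \\
F_2(\mathbf{a},\overline{\mathbf{g}}) &= \int_\Sigma \sigma\big(\partial_t \mathbf{a} + \overline{\mathbf{g}}\big)\,\mathrm{d}x .
\end{align*}
One Newton step from $(\mathbf{a}_0,\mathbf{g}_0) = (\hat{\mathbf{a}},\mathbf{0})$ reads $(\mathbf{a}_1,\mathbf{g}_1) = (\hat{\mathbf{a}},\mathbf{0}) + (\delta\mathbf{a},\delta\mathbf{g})$, where the increment solves $F'(\hat{\mathbf{a}},\mathbf{0})[\delta\mathbf{a},\delta\mathbf{g}] = -F(\hat{\mathbf{a}},\mathbf{0})$. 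The proof amounts to computing both sides of this linear system explicitly and recognising \eqref{egger:eq:post1}--\eqref{egger:eq:post2}. Uniqueness of solutions of that system, which is assumed, then gives $(\delta\mathbf{a},\delta\mathbf{g}) = (\tilde{\mathbf{z}},\tilde{\mathbf{g}})$.

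\textbf{Step 1: the Gateaux derivative.} The terms $\sigma(\partial_t\mathbf{a}+\overline{\mathbf{g}})$ and $\int_\Sigma\sigma(\partial_t\mathbf{a}+\overline{\mathbf{g}})\,\mathrm{d}x$ are linear, so they differentiate to themselves applied to $(\delta\mathbf{a},\delta\mathbf{g})$. For the nonlinear term, the chain rule applied to $\mathbf{a}\mapsto \operatorname{curl}(\partial_B w(\operatorname{Curl}\mathbf{a}))$ gives $\operatorname{curl}(\partial_{BB}w(\operatorname{Curl}\hat{\mathbf{a}})\operatorname{Curl}\delta\mathbf{a})$, which equals $\operatorname{curl}(\tilde\nu\operatorname{Curl}\delta\mathbf{a})$ with the stated choice of $\tilde\nu$. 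Here $\tilde\nu$ is the (tensor-valued) Hessian of $w$, evaluated pointwise in space and time along the magnetostatic sequence.

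\textbf{Step 2: the residual.} I will evaluate $F$ at $(\hat{\mathbf{a}},\mathbf{0})$. Since $\hat{\mathbf{a}}$ solves \eqref{egger:eq:static} at every time instant, the terms $\operatorname{curl}(\partial_B w(\operatorname{Curl}\hat{\mathbf{a}}))$ and $\mathbf{j}_s$ cancel. The residual therefore reduces to $F_1 = \sigma\partial_t\hat{\mathbf{a}}$ and $F_2 = \int_\Sigma\sigma\partial_t\hat{\mathbf{a}}\,\mathrm{d}x$. Inserting Steps 1 and 2 into the Newton system reproduces exactly \eqref{egger:eq:post1}--\eqref{egger:eq:post2} with unknowns $(\delta\mathbf{a},\delta\mathbf{g})$. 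Uniqueness then identifies them with $(\tilde{\mathbf{z}},\tilde{\mathbf{g}})$, so $\mathbf{a}_1 = \tilde{\mathbf{a}}$ by \eqref{egger:eq:post0} and $\mathbf{g}_1 = \tilde{\mathbf{g}}$.

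\textbf{Main obstacle.} The only delicate point is making the derivative rigorous: one needs a function-space setting (time-periodic, with homogeneous boundary conditions) in which $F$ is Gateaux differentiable. This requires $w$ to be twice differentiable with $\partial_{BB}w$ bounded and uniformly positive, as in the framework of~\cite{egger:Egger2025}. In the weak form the nonlinear term becomes $\int \partial_B w(\operatorname{Curl}\mathbf{a})\cdot\operatorname{Curl}v$. I would differentiate under the integral using dominated convergence together with the bound on $\partial_{BB}w$. The temporal regularity $\partial_t\hat{\mathbf{a}}$ needed for the right-hand side is implicitly assumed by the posed solvability. Beyond this the argument is purely algebraic. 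As a consistency check, for quadratic $w$ the derivative is exact, which recovers Theorem~\ref{thm:1}.
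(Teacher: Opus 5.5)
Your proposal is correct and follows the paper's proof: you evaluate the residual of \eqref{egger:eq:1}--\eqref{egger:eq:2} at $(\hat{\mathbf{a}},\mathbf{0})$, use \eqref{egger:eq:static} to cancel $\mathbf{j}_s$, and identify the Newton system $DF(x^0)v=-F(x^0)$ with \eqref{egger:eq:post1}--\eqref{egger:eq:post2}; the paper writes $F(x)=y$ with residual $y-F(x^0)$, which gives the same system. Your remarks on Gateaux differentiability and on uniqueness identifying the increment with $(\tilde{\mathbf{z}},\tilde{\mathbf{g}})$ add rigor that the paper takes for granted under its standing solvability assumption.
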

\begin{proof}
 To verify the claim, we note that \eqref{egger:eq:1}--\eqref{egger:eq:2} can be phrased as an abstract nonlinear system $F(x)=y$ with $x=(\mathbf{a},\mathbf{g})$ and $y=(\mathbf{j}_s,0)$.
For $x^0=(\hat{\mathbf{a}},\mathbf{0})$, the two components of the residual $r=y-F(x^0)$ are given by
\begin{align*}
 r_1 &= \mathbf{j}_s - \sigma \partial_t \hat{\mathbf{a}} - \operatorname{curl}(\partial_{B} w(\operatorname{Curl} \hat{\mathbf{a}})) = -\sigma \partial_t \hat{\mathbf{a}}\\
 r_2 &= -\int_\Sigma \sigma \partial_t \hat{\mathbf{a}}.
\end{align*}
In the first equation, we used the static problem \eqref{egger:eq:static}.
Newton's method defines $x^1 = x^0 + v$ where $v$ is the solution of the linearized problem $DF(x^0) v = r$, which exactly amounts to the system \eqref{egger:eq:post1}--\eqref{egger:eq:post2} with $\tilde \nu = \partial_{BB} w(\operatorname{Curl} \hat{\mathbf{a}})$ and $v=(\tilde{\mathbf{z}},\tilde{\mathbf{g}})$. 
\end{proof}

\begin{remark}
The proposed correction is exact for linear models and corresponds to the first Newton iterate in the nonlinear case. 
Our numerical results demonstrate that the reaction field can already be recovered by solving a single linearized problem.
%
%
Similar approaches, computing a linearized correction for the reaction field on a three-dimensional sub-domain representing or surrounding the conductor have been considered in \cite{egger:Yamazaki2009,egger:Okitsu2012}. A clear mathematical justification of these approaches would be highly valuable.
\end{remark}

\subsection{Efficient computation of the correction field}

The proposed post-processing strategy requires the solution of the linearized time-periodic eddy-current problem \eqref{egger:eq:post1}--\eqref{egger:eq:post2}. We briefly discuss some standard approaches to solve this problem and note that only a rather moderate accuracy is required to obtain a reasonable estimate of the correction field and the resulting eddy-currents. 

A simple but widely used solution strategy is classical time stepping combined with a cycling procedure to enforce periodicity. In our numerical tests, reasonable approximations of the correction field $(\tilde{\mathbf{z}},\tilde{\mathbf{g}})$ and the resulting eddy-current losses could be obtained after two or three cycles. The resulting algorithm is straightforward to implement, it requires only the repeated solution of generalized magnetostatic field problems, but it is inherently sequential.

Alternatively, one may employ more modern parallel-in-time techniques such as Parareal or multigrid-in-time methods~\cite{egger:Gander2015}. We adopt the latter strategy here which, similar to the time-stepping approach, reduces the solution of the time-periodic problem to the repeated solution of generalized magnetostatic problems. These problems can be treated concurrently, leading to efficient parallel algorithms with excellent convergence and parallel scaling properties~\cite{egger:Egger2025}. 
%

\section{Numerical validation}
\label{egger:sec:4}

We illustrate the proposed approach by computing eddy-current losses for a permanent-magnet synchronous motor (PMSM). A detailed description of the motor geometry and the material properties can be found in \cite{egger:Krenn2025}.
To evaluate the robustness of the methods with respect to problem data, we consider in our simulations a high-frequency, high-current operating point (16000 RPM and current density $23.7$ A/mm\textsuperscript{2}). Qualitatively similar results were obtained for less challenging regimes. 
\begin{figure}[ht!]
    \centering
    \includegraphics[width=.32\linewidth]{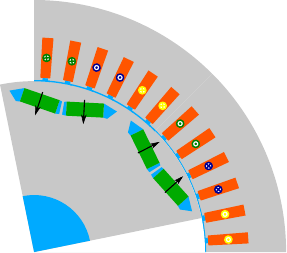}
    \hspace*{2cm}
    \includegraphics[trim={0 0 0 25}, clip, width=.32\linewidth]{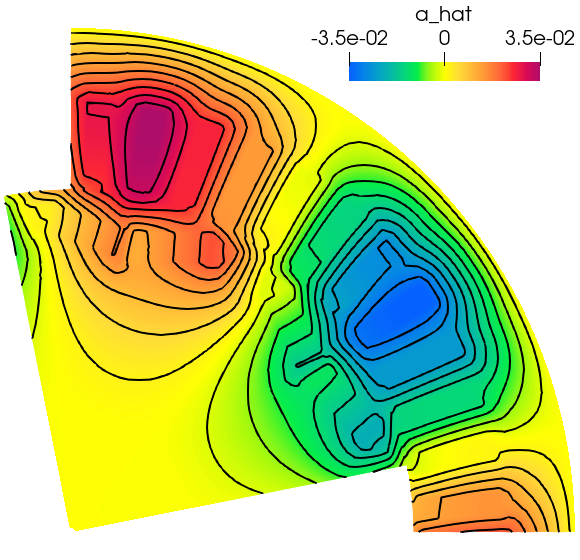}%
    \caption{Benchmark model of a PMSM from \cite{egger:Krenn2025}. Left: Geometric model. Right: Magnetic vector potential $\mathbf{a}$ and corresponding isolines representing magnetic fluxes $\mathbf{b}=\operatorname{Curl}(\mathbf{a})$. }
    \label{fig:pmsm}
\end{figure}
Using symmetry, only one quarter of the motor is simulated; see the left plot of Fig.~\ref{fig:pmsm}. 
The magnetic vector potential is discretized by lowest-order finite elements, combined with implicit Euler time integration using 240 time steps per mechanical quarter revolution. Rotor motion is treated by the harmonic mortar method \cite{egger:Egger2021}. 
The right plot of Fig.~\ref{fig:pmsm} shows one snapshot of a typical solution.

In our numerical tests, we compare four different approaches for eddy-current simulation discussed in the previous sections, namely:
\begin{itemize}\itemindent2em\labelsep2em
\item[(M1)] the full nonlinear time-periodic eddy-current model \eqref{egger:eq:1}--\eqref{egger:eq:2};
\item[(M2)] the magnetostatic post-processing approach \eqref{egger:eq:static}--\eqref{egger:eq:static_ec};
\item[(M3)] a local linear correction restricted to the conducting regions only~\cite{egger:Yamazaki2009,egger:Steentjes2015}; and
\item[(M4)] the proposed global linear correction scheme \eqref{egger:eq:post0}--\eqref{egger:eq:post3}.
\end{itemize}
In Table~\ref{tab:results} we summarize the main results and computation times. 
The computed losses of method (M1) serve as reference values. 
The standard post-processing approach (M2) significantly overestimates the losses, which is to be expected \cite{egger:Belahcen2010}. 
\begin{table}[ht]
\centering
\renewcommand{\arraystretch}{1.25}
\setlength{\tabcolsep}{8pt}
\begin{tabular}{l||r|r|r|r}
Model & M1 & M2 & M3 & M4\\
\hline
\hline
Loss [W]
& 926.44
& 2218.34
& 803.53
& 926.92
\\
\hline 
Time (1 proc.) [s]
& 250.72
& 92.30
& 93.61
& 118.08
\\
\hline 
Time (30 proc.) [s]
& 10.62
& 4.67
& 5.89
& 7.73
\end{tabular}
\caption{Computed losses and execution times for different eddy-current loss models. To evaluate the parallel scaling behaviour, computations times are presented for different number of processors.}
\label{tab:results}
\end{table}
The local correction method (M3) 
slightly underestimates the eddy-current losses since
it neglects the global nature of the reaction field.
The proposed method (M4) reproduces the eddy-current losses with an error below $1\%$, requiring only about $30$--$60\%$ additional computation time compared to the magnetostatic approximation (M2).

\section{Discussion}
\label{egger:sec:5}

As mentioned in the introduction, the proposed splitting strategy (i)--(ii) is generally applicable in two and three space dimensions.
Recovering genuinely three-dimensional effects from two-dimensional simulations, however, remains an open challenge. Existing hybrid approaches \cite{egger:Yamazaki2009,egger:Okitsu2012,egger:Steentjes2015} require substantial additional modelling and computational effort, while their benefits have not yet been demonstrated conclusively. A rigorous justification of such reconstruction strategies by error estimation and asymptotic analysis therefore remains an important topic for future research.
%
\begin{acknowledgement}
This work was supported by the joint DFG/FWF Collaborative Research Centre CREATOR (DFG: Project-ID 492661287/TRR 361; FWF: 10.55776/F90).
\end{acknowledgement}



\begin{thebibliography}{99.}

\bibitem{egger:Kameari1990}
A. Kameari. 
\newblock Calculation of transient 3-D eddy current using edge-
elements. 
\newblock {\em IEEE Trans. Magn.}, 26 (1990), 466--469.

\bibitem{egger:Tsukerman1992}
I. A. Tsukerman, A. Konrad, and J. D. Lavers.
\newblock A method for circuit connections in time-dependent eddy current problems.
\newblock {\em IEEE Trans. Magn.}, 28 (1992),  1299--1302.

\bibitem{egger:Demenko2010}
A. Demenko and K. Hameyer. 
\newblock Field and field-circuit models of electrical machines.
\newblock {\em COMPEL} 29 (2010), 8--22.

\bibitem{egger:Dlala2010}
E. Dlala, A. Belahcen, and A. Arkkio. 
\newblock On the Importance of Incorporating Iron Losses in the Magnetic Field Solution of Electrical Machines.
\newblock {\em IEEE Trans. Magn.} 46 (2010), 3101--3104.

\bibitem{egger:Gyselinck2006} 
J.~Gyselinck, R.~V.~Sabariego, and P.~Dular.
\newblock A nonlinear time-domain homogenization technique for laminated iron cores in three-dimensional FE models. 
\newblock {\em IEEE Trans. Magn.}, 42 (2006), 763--766.

\bibitem{egger:Biro2006}
O. Biro and K. Preis. 
\newblock An efficient time domain method for nonlinear periodic eddy current problems. 
\newblock {\em IEEE Trans. Magn.}, 42 (2006),  695--698.

\bibitem{egger:Takahashi2012}
Y. Takahashi, T. Iwashita, H. Nakashima, T. Tokumasu, M. Fujita, S. Wakao, K. Fujiwara, and Y. Ishihara.
\newblock Parallel time-periodic finite-element method for steady-
state analysis of rotating machines.
\newblock {\em IEEE Trans. Magn.}, 48 (2012),  1019--1022.

\bibitem{egger:Deak2008}
C. Deak, L. Petrovic, A. Binder, M. Mirzaei, D. Irimie, and
B. Funieru. 
\newblock Calculation of eddy current losses in permanent magnets of synchronous machines. 
\newblock In: Proc. SPEEDAM, Jun. 2008. 

\bibitem{egger:Belahcen2010}
A. Belahcen and A. Arkkio. 
\newblock Permanent magnets models and losses in 2D FEM simulation of electrical machines.
In: {\em Proc. of 19th ICEM Conference, Rome}, (2010), pp. 1-6.

\bibitem{egger:Yamazaki2009}
K. Yamazaki and Y. Kanou.
\newblock Rotor loss analysis of interior permanent magnet motors using combination of 2-D and 3-D finite element method.
\newblock {\em IEEE Trans. Magn.}, 45 (2009), 1772--1775.

\bibitem{egger:Okitsu2012}
T. Okitsu, D. Matsuhashi, Y. Gao, and K. Muramatsu.
\newblock Coupled 2-D and 3-D eddy current analyses for evaluating eddy current loss of a permanent magnet in surface PM motors.
\newblock {\em IEEE Trans. Magn.}, 48 (2012), 3100--3103.

\bibitem{egger:Steentjes2015}
S. Steentjes, S. Boehmer, and K. Hameyer.
\newblock Permanent magnet eddy-current losses in 2-D FEM simulations of electrical machines. 
\newblock {\em IEEE Trans. Magn.}, 51 (2015), Art no. 6300404.

\bibitem{egger:Salon}
S. J. Salon.
\newblock {\em Finite Element Analysis of Electrical Machines}.
\newblock Springer, New York, NY, 
1995.

\bibitem{egger:Egger2025}
H. Egger and A. Schafelner.
\newblock A parallel-in-time solver for nonlinear degenerate time-periodic parabolic problems.
\newblock arXiv:2502.21013, to appear in: {\em Proc. of LSSC 2025}.

\bibitem{egger:Gander2015}
M. J. Gander.
\newblock 50 years of time parallel time integration. 
\newblock In: {\em Multiple Shooting and Time
Domain Decomposition Methods}. 
Springer, 2015. 
pp. 69--113. (2015)

\bibitem{egger:Krenn2025}
P. Gangl, H. De Gersem and N. Krenn.
\newblock Multi-material topology optimization of electric machines under maximum temperature and stress constraints.
\newblock {\em Appl. Math. Model.}, 150 (2025), 116347.

\bibitem{egger:Egger2021}
H. Egger, M. Harutyunyan, M. Merkel, and S. Sch\"ops. 
\newblock On the stability of harmonic coupling methods with application to electric machines. 
\newblock In: {\em Proc. of SCEE 2020}, 
Springer, Cham, 2021.   
\end{thebibliography}
\end{document}